\def\Exp{{\mathbb{E}}}
\newcommand{\cov}{{\rm cov}}
\newcommand{\corr}{{\rm corr}}
\newcommand{\R}{{\mathbb R}}
\newcommand{\cA}{{\cal A}}
\newcommand{\cY}{{\cal Y}}
\newcommand{\cX}{{\cal X}}
\newcommand{\cM}{{\cal M}}
\newcommand{\cF}{{\cal F}}
\newcommand{\cG}{{\cal G}}
\newcommand{\cC}{{\cal C}}
\newcommand{\cQ}{{\cal Q}}
\newcommand{\id}{{\bf I}}
\newcommand{\bx}{{\bf x}}
\newcommand{\bX}{{\bf X}}
\newcommand{\bY}{{\bf Y}}
\newcommand{\bZ}{{\bf Z}}
\newcommand{\bN}{{\bf N}}
\newcommand\independent{\protect\mathpalette{\protect\independenT}{\perp}}
\def\independenT#1#2{\mathrel{\rlap{$#1#2$}\mkern2mu{#1#2}}}
\definecolor{lightgray}{gray}{0.85}
\tikzset{>=stealth'} 
\tikzstyle{graphnode} = 
\tikzstyle{var}   =[graphnode,fill=white]
\tikzstyle{vardashed}   =[graphnode,draw=gray,fill=white]
\tikzstyle{obs}   =[graphnode,fill=black,text=white]
\tikzstyle{obsgrey}   =[graphnode,draw=white,fill=lightgray,text=black]
\tikzstyle{par}    =[graphnode,draw=white,fill=red,text=black] 
 \tikzstyle{crucial} =[graphnode,draw=white,fill=yellow,text=black] 
\tikzstyle{fac}   =[rectangle,draw=black,fill=black!25,minimum size=5pt]
\tikzstyle{facprior} =[rectangle,draw=black,fill=black,text=white,minimum size=5pt]
\tikzstyle{edge}  =[draw=white,double=black,very thick,-]
\tikzstyle{blueedge}  =[draw=white,double=blue,very thick,-]
\tikzstyle{rededge}  =[draw=white,double=red,very thick,-]
\tikzstyle{prior} =[rectangle, draw=black, fill=black, minimum size=
\tikzstyle{dirprior} = [circle, draw=black, fill=black, minimum
\tikzstyle{dot_node}=[draw=black,fill=black,shape=circle]
\newtheorem{Def}{Definition}
\newtheorem{Thm}{Theorem}
\newtheorem{Lem}[Thm]{Lemma}
\newtheorem{Post}{Postulate}
\newtheorem{Ex}{Example}
\newtheorem{Option}{Option}
\date{May 17, 2018}
\begin{document}

\title{Merging joint distributions via\\ causal model classes with low VC dimension}

\author{Dominik Janzing\\
{\small dominik.janzing@tuebingen.mpg.de} \\
       {\small Max Planck Institute for Intelligent Systems}\\
       {\small Max-Planck-Ring 4}\\
       {\small 72076 T\"ubingen, Germany}}

\maketitle

\begin{abstract}
If $\bX,\bY,\bZ$ denote sets of random variables, two different data sources may
contain samples from $P_{\bX,\bY}$ and 
$P_{\bY,\bZ}$, respectively. 
We argue that causal inference can help inferring properties of the `unobserved joint distributions' $P_{\bX,\bY,\bZ}$ or
$P_{\bX,\bZ}$. 
The properties may be
conditional independences (as in `integrative causal inference') or also quantitative statements about dependences. 

More generally, we  define 
a learning scenario where the input is a subset of variables and the label is some statistical property of that subset. Sets of jointly observed variables
define the training points, while unobserved sets are possible test points. 
To solve this learning task,  we infer, as an intermediate step, a causal model from the observations 
that then entails properties of unobserved sets. 
Accordingly, we can define the VC dimension of
a class of causal models and derive generalization bounds for the predictions. 

Here, causal inference becomes more modest and better accessible to empirical tests than usual: rather than trying to find a causal hypothesis that is `true' (which is a problematic term when
it is unclear how to define interventions)
a causal hypothesis is {\it useful} whenever 
it correctly predicts statistical properties of unobserved joint distributions.

Within such a `pragmatic' application of causal inference, some popular  heuristic approaches 
become justified in retrospect. It is, for instance, allowed to infer DAGs from partial correlations instead of conditional independences if the DAGs are only used to predict partial correlations. 

I hypothesize that our pragmatic view on causality may even cover the usual meaning in terms of interventions and sketch why predicting the impact of interventions can sometimes also be phrased as a task of the above type.
\end{abstract}

\section{Introduction}

The difficulty of inferring causal relations from purely observational data lies in the fact that the observations drawn from a joint distribution $P_\bX$ with $\bX:=\{X_1,\dots,X_n\}$ are supposed to imply statements about how the system behaves under {\it interventions} \citep{Pearl2000,Spirtes1993}. More specifically,
one may be interested in the new joint distribution obtained by setting 
a subset $\tilde{\bX} \subset \bX$ of the variables to some specific values, which induces 
a different joint distribution.
If the task of causal inference is phrased this way, it actually lies outside the
typical domain of statistics. It thus requires   
assumptions that link statistics to causality
to render the task feasible under certain limitations. 
For instance, one can infer the causal directed acyclic graph (DAG) up to its Markov equivalence class from the observed conditional statistical independences
\citep{Spirtes1993,Pearl2000}. Moreover, on can also distinguish DAGs in the same Markov equivalence class when certain model assumptions such as linear models with non-Gaussian noise \citep{Kano2003} or non-linear additive noise \citep{Hoyer} are made. 

\paragraph{Relevance of causal information without reference to interventions}
The goal of causal inference need not necessarily consist in predicting the impact of interventions. 
Instead, causal information 
could help 
for transferring knowledge across data sets with different distributions  \citep{anticausal}. The underlying idea is
a modularity assumption  \citep[and references therein]{causality_book}
according to which 
only some conditional distributions in a causal Bayesian network may change  and others remain fixed.
Among many other tasks for which causal information could help, we should particularly emphasize so-called `integrative causal inference'  \citep{Tsamardinos}, which is
the work that is closest to the present paper.
 \citet{Tsamardinos} use causal inference to
combine knowledge from different data sets. The idea reads as follows:
Given some data sets $D_1,\dots,D_k$ 
containing observations from
different, but overlapping sets $S_1,\dots,S_k\subset \{X_1,\dots,X_n\}$ of variables. 
Then causal inference algorithms are applied independently to $S_1,\dots,S_k$. Afterwards,
a joint causal model is constructed that
entails
independences of 
some other subsets of variables of which 
no joint observations are available
(by slightly abusing terminology, we 
will refer to sets of variables that have not been observed together as `unobserved sets of variables', but keep in mind that although they
have not been observed {\it jointly},
they usually have been observed individually as part of some other observed set). 

To explain the idea more explicitly, we
 sketch Example~1 from \citet{Tsamardinos}, which combines knowledge from just two data sets\footnote{Note that combining information from data sources with overlapping variable sets has already been considered by \citet{Tillman2011,Tillman2014}, but the work of \citet{Tsamardinos} is closer to the present work due to its explicit goal of predicting unobserved statistical properties}. $D_1$ contains the variables
 $X,Y,W$ for which one observes 
$X \independent W\,|Y$ and no further (conditional or unconditional) independences. 
 The data set $D_2$ contains 
 the variables $X,W,Z$, where one observes
 $X\independent Q\,|Z$ as the only independence. 
 Then one constructs the set of all 
Maximal Ancestral Graphs (MAGs)\footnote{MAGs define a class of graphical causal models that is closed under marginalization and conditioning on
subsets of variables \cite{Richardson2002}.} 
 on the set $X,Y,Z,W$ 
that is consistent with the observed pattern of independences. 
As a result, the MAG implies 
 that $X \not\independent Y$, given any other subset of variables, although $X$ and $Y$ have never been observed together. 
 
From a higher-level perspective, the inference procedure thus reads:
 
\vspace{0.3cm} 
 
\begin{equation}\label{eq:scheme}
\begin{array}{c}\hbox{statistical properties of  observed subsets}    \\
\downarrow\\
\hbox{causal models consistent with those}\\
\downarrow\\
\hbox{statistical properties of unobserved subsets}\end{array}
\end{equation}

\vspace{0.3cm}

In contrast to \cite{Tsamardinos}, the term `statistical properties' need  not   necessarily
refer to conditional independences. On the one hand, there is meanwhile a broad variety of 
new approaches that infer causal directions 
from statistical properties other than conditional independences 
e.g., \cite{Kano2003,SunLauderdale,Hoyer,Zhang_UAI,
deterministic,SecondOrder,
JorisOliver,DiscrAN,Mooij2016,Marx2017}. On the other hand, the causal model inferred from the observations may entail statistical properties other than conditional independences -- subject to the model assumptions
on which the above-mentioned inference procedures rely. 

Regardless of what kind of statistical properties
are meant, the scheme in \eqref{eq:scheme}
describes a sense in which
a causal model that can be tested within the usual i.i.d.~scenario. This way, a causal model
entails statements that can be empirically tested
without referring to an interventional scenario. 
Consequently,
we drop the ambitious demand of finding `the true' causal model and replace it with the more modest goal of finding causal models that properly predict unseen joint distributions. 
After reinterpreting causal inference this way,
it also becomes directly accessible to 
statistical learning theory: 
assume we have a found a causal model that is consistent with the statistical properties of a large number of observed subsets,
we can hope that it also correctly predicts 
properties of unobserved subsets provided that
the causal model has been taken from
a sufficiently `small' class (to avoid overfitting).

This `radical empirical' point of view can be
developed even further: rather than asking 
whether some statistical property like
statistical independence is `true', we only ask whether the test at hand rejects or accepts it.\footnote{Asking whether two variables are 'in fact' statistically independent does not make sense for an empirical sample unless the sample
is thought to be part of an infinite sample
which is problematic in our finite world.}
Hence we can replace the term `statistical 
properties' in the scheme \eqref{eq:scheme}
with 'test results'. This point of view may also justify several common pragmatic 
solutions of the following issues:

\paragraph{Linear causal models for 
non-linear relations}  
Our perspective justifies to
apply multivariate Gaussian causal models to data sets that are clearly non-Gaussian:
Assume a hypothetical causal graph is inferred from 
the
conditional independence pattern obtained via {\it partial
correlation tests} (which is correct only for multivariate Gaussians), as done by common causal inference software
\cite{TETRAD}. Even if one knows that
the graph only represents
partial correlations correctly, but not conditional independences, it 
may predict well partial correlations
of unseen variable sets.
This way, the linear causal model can be helpful when the goal is only to predict linear statistics. This is good news 
particularly because general conditional independence tests remain a difficult issue, see, for instance, \citet{UAI_Kun_kernel}, for a recent proposal.

\paragraph{Tuning of confidence levels}
There is also another heuristic solution of a difficult question in causal inference that can be justified:
Inferring causal DAGs based on causal Markov condition and causal faithfulness \citep{Spirtes1993} relies on setting
the confidence levels for accepting conditional dependence. In practice, one will usually adjust the level such 
that enough independences are accepted and enough are rejected for the sample size at hand, otherwise inference is impossible. 
This is problematic, however,
from the perspective of the
common justification of causal faithfulness: if one rejects causal hypotheses with accidental conditional 
independences because they occur `with measure zero' \citep{Meek1995},
it becomes questionable to set the confidence level high enough just because one wants ot get some independences accepted.\footnote{For a detailed discussion of how causal conclusions 
of several causal inference algorithms
may repeatedly change after increasing the  sample size see \citep{Kelly2010}.}

Here we argue as follows instead: 
Assume we are given any arbitrary confidence level as threshold for the conditional independence tests. Further assume we have  found
a DAG $G$ from a sufficiently small model class
that is consistent with all 
the outcomes 'reject/accept' of
the conditional independence tests on a large number of subsets $S_1,\dots,S_k$. It is then justified to assume that $G$ will correctly predict the outcomes of this test
for unobserved variable sets $\tilde{S_1},\dots,\tilde{S}_l \subset S_1\cup \cdots \cup S_k$.

\paragraph{Methodological justification of
causal faithfulness}
In our learning scenarios, DAGs are used to predict for some 
choice of variables $X_{j_1},X_{j_2},\dots,X_{j_k}$
whether 
\[
X_{j_1} \independent X_{j_2}\,|X_{j_3}, \dots,X_{j_k}.
\]
Without faithfulness, the DAG can only entail {\it in}dependence, but never entail dependence. 
Rather than stating that 'unfaithful distributions are unlikely' we need faithfulness
simply to obtain a definite prediction in the first place.

\vspace{0.3cm}

The paper is structured as follows.
Section~\ref{sec:whycausal} explains why
causal models sometimes entail strong
statements regarding the composition of data sets. This motivates to use causal inference as an intermediate step when the actual task is to predict properties of unobserved joint distributions. 
Section~\ref{sec:formal} formalizes
our scenario as a standard prediction task where
the input is a subset (or an ordered tuple) of variables,
for which we want to test some statistical property. 
The output is a statistical property of that subset (or tuple). 
 This way, each observed variable set defines a {\it training} point for inferring the causal model while the unobserved variable sets are the {\it test} instances.
Accordingly, classes of causal models define
function classes, as described in Section~\ref{sec:VCdim}, whose richness can be 
measured via VC dimension. By straightforward
application of VC learning theory,
 Section~\ref{sec:generalization}
derives error  bounds for the predicted
statistical properties and discusses how they can be used as guidance for constructing causal hypotheses from not-too-rich classes of hypotheses. In Section~\ref{sec:interventions} we argue that our use of causal models is linked to the usual interpretation of causality in terms of interventions, which raises philosophical questions of whether the empirical content of causality reduces to providing rules on how to merge probability distributions.

\section{Why causal models are particularly helpful \label{sec:whycausal}}

It is not obvious why inferring properties of
unobserved joimnt distributions from observed ones should take the `detour' via causal models
visualized in \eqref{eq:scheme}.
One could also define a class of {\it statistical} models (that is, a class of joint distributions without any causal interpretation)
that is sufficiently small to yield definite predictions for the desired properties. 
The below example, however, suggests that
causal models typically entail particularly strong predictions regarding properties of the joint distribution. This is, among other reasons, because causal models
on subsets of variables sometimes imply 
a simple joint causal model. 
To make this point, consider the following toy example.

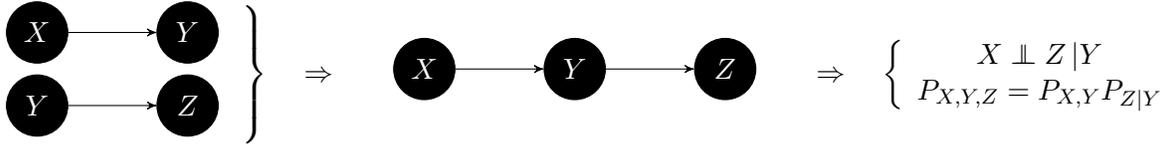
\begin{figure}
\[
\left. \begin{array}{c}
\hbox{
\begin{tikzpicture}
    \node[obs] at (0,0) (X) {$X$} ; 
    \node[obs] at (2,0) (Y) {$Y$} edge[<-] (X) ; 
\end{tikzpicture}
}
\\
\hbox{
\begin{tikzpicture}
    \node[obs] at (0,0) (Y) {$Y$} ; 
    \node[obs] at (2,0) (Z) {$Z$} edge[<-] (Y) ; 
\end{tikzpicture}
}
\end{array}
\right\} \quad \Rightarrow \quad 
\begin{array}{c}
\hbox{
\begin{tikzpicture}
    \node[obs] at (0,0) (X) {$X$} ; 
    \node[obs] at (2,0) (Y) {$Y$} edge[<-] (X) ;
    \node[obs] at (4,0) (Z) {$Z$} edge[<-] (Y) ; 
\end{tikzpicture}
}
\end{array}
\quad \Rightarrow \quad \left\{\begin{array}{c}
X \independent Z\,| Y \\ P_{X,Y,Z}=P_{X,Y} P_{Z|Y}
\end{array}\right.
\]
\caption{\label{fig:chain} Simplest example where causal information allows to `glue'
two distributions two a unique joint distribution.}
\end{figure}

\begin{Ex}[merging two cause-effect pairs to a chain]\label{ex:chain}
Assume we are given variables $X,Y,Z$ where we observed $P_{X,Y}$ and $P_{Y,Z}$.
The extension to $P_{X,Y,Z}$ is heavily underdetermined. Now assume that
we have the additional causal information that $X$ causes $Y$ and $Y$ causes $Z$ (see Figure~\ref{fig:chain}, left), in the sense that both pairs are causally sufficient. In other words, neither
$X$ and $Y$ nor $Y$ and $Z$ have a common cause.
This information can be the result of some bivariate causal inference algorithm that is able to exclude confunding. 
Given that there is, for instance, an additive noise model
from $Y$ to $Z$  \citep{Kano2003,Hoyer}, a confounder is unlikely
because it would typically destroy  
the independence of the additive noise term.

\vspace{0.2cm}
\noindent
{\bf Entire causal structure:}
We can then infer the entire causal structure to be the causal chain
$X\rightarrow Y\rightarrow Z$ for the following reasons. 
First we show that $X,Y,Z$ is a causally sufficient set of variables:
A common cause of $X$ and $Z$ would be a common cause of $Y$ and $Z$, too. The pair $(X,Y)$ and $(Y,Z)$ both have no common causes by assumption. 
One checks easily that no DAG with $3$ arrows leaves all $3$ pairs unconfounded.
Checking all DAGs on $X,Y,Z$ with $2$ arrows that have a path  from 
$X$ to $Y$ and from $Y$ to $Z$, we end up with the causal chain
in Figure~\ref{fig:chain}, middle, as the only option.

\vspace{0.2cm}
\noindent
{\bf Resulting joint distribution:}
This implies 
$
X \independent Z\,|Y.
$
Therefore,
$
P_{X,Y,Z}=P_{X,Y} P_{Z|Y}.
$
\end{Ex}
Note that our presentation of Example~\ref{ex:chain} neglected a subtle issue.
There are several different notions of
what it means that $X$ causes $Y$ in a {\it causally sufficient} way:
We have above used the purely graphical criterion asking whether there is some variable $Z$ having directed paths to $X$ and $Y$. An alternative option for defining that $X$ influences $Y$ in a causally sufficient way would be to demand that $P_Y^{ do(X=x)}=P_{Y|X=x}$.
This condition is called `interventional sufficiency' in 
\cite{causality_book}, a condition that
is testable by interventions on $X$ without referring to a larger background DAG in which $X$ and $Y$ are embedded.
This condition, however, is weaker than the graphical one and not sufficient for the above argument. This is because one could add the link $X \rightarrow Z$ to the chain $X\rightarrow Y \rightarrow Z$ 
and still observe that $P_Z^{do(Y=y)}=P_{Z|Y=y}$, as detailed by Example~9.2 in
\cite{causality_book}. 
Therefore, we stick to the graphical criterion of causal sufficiency and justify this by the fact that for `generic' parameter values it coincides with interventional sufficiency (which would actually be the more reasonable criterion).

\paragraph{Causal marginal problem vs. probabilistic marginal problem}
Given marginal distributions $P_{S_1},\dots,P_{S_k}$ on sets of variables,
the problem of existence and uniqueness 
of the joint distribution $P_{S_1\cup \cdots \cup S_k}$ (consistent with the marginals) is usually referred to as {\it marginal problem} \citep{Vorobev1962,Kellerer1964}. 
Here we will call it the {\it probabilistic}
marginal problem. 
Motivated by this terminology, we informally inroduce the
{\it causal marginal problem} as follows.
Given distributions $P_{S_1},\dots, P_{S_k}$
together with causal models $M_1,\dots,M_k$,
is there a unique joint distribution 
$P_{S_1\cup \cdots \cup S_k}$ with
causal model $M$ (consistent with the marginal model). 
The definition is informal because we have not specified our notion of `causal model'. 
Neither did we specify ``marginalization'' of causal models. For DAGs, marginalization requires the more general graphical model class MAGs  \citep{Richardson2002} already mentioned above,
while marginalization of structural equations
require structural equations with dependent noise terms \citep{Rubensteinetal17}.

Without formalizing this claim, Example~\ref{ex:chain} suggests that the causal marginal problem may have a unique solution even when the (probabilistic) marginal problem
doesn't \citep{causalMarginalTalk} -- subject to some genericity assumption explained above.  

The procedure for constructing the joint distribution in Example~\ref{ex:chain} can be described by the following special case of the scheme 
in \eqref{eq:scheme}:

\begin{equation}\label{eq:scheme2}
\begin{array}{c}\hbox{statistical properties of  observed subsets}    \\
\downarrow\\
\hbox{causal model for  observed subsets}    \\
\downarrow\\
\hbox{joint causal model}\\
\downarrow\\
\hbox{statistical properties of unobserved subsets}\end{array}
\end{equation}

Whether or not the joint causal model 
is inferred by first inferring `marginal' causal models for whether it is directly inferred 
from statistical properties of marginal
distributions will be irrelevant in our further discussion. In Example~\ref{ex:chain}, the detour 
over marginal causal models has been particularly simple.

\section{The formal setting \label{sec:formal}} 

Below we will usually refer to some given set of variables $S:=\{X_{j_1},\dots,X_{j_k}\}$ whose subsets are considered. Whenever this cannot cause any confusion, we will not carefully distinguish between the {\it set} $S$ and the {\it vector} $\bX:=(X_{j_1},\dots,X_{j_k})$ and also use the term
'joint distribution $P_S$' although the order of variables certainly matters.

\subsection{Statistical properties}
Statistical properties are the crucial concept of this work. On the one hand, they are used to infer causal structure. On the other hand, causal structure is used to predict them.
\begin{Def}[statistical property]
A 
statistical property $Q$ with range $\cY$ is
given by a function 
\[
Q: P_{Y_1,\dots,Y_k} \rightarrow \cY
\]
where $P_{Y_1,\dots,Y_k}$ denotes the joint distribution of $k$ variables
under consideration and $\cY$ some output 
space. Often we will consider
binary or real-valued properties, that is
$\cY=\{0,1\}$, $\cY=\{-1,+1\}$, or $\cY=\R$, respectively. 
\end{Def}  
By slightly abusing terminology, the term `statistical property' will sometimes refer
to the value in $\cY$ that is the output of
$Q$ or to the function $Q$ itself. This will, hopefully, cause no confusion. 

Here, $Q$ may be defined for fixed size
$k$ or for general $k$. Moreover, we will consider properties that depend on the ordering of the variables $Y_1,\dots,Y_k$, those that do not depend on it, or those that are invariant under some permutations $k$ variables. This will be clear from the context.
We will be refer to $k$ tuples for which
part of the order matters as `partly ordered 
tuples'.
To given an impression about the variety of 
statistical properties we conclude the section with a list of examples.

We start with an example for a binary property that
does not refer to an ordering:
\begin{Ex}[statistical independence]
\[
Q(P_{Y_1,\dots,Y_k}) = \left\{ \begin{array}{cc}
1 & \hbox{ for $Y_j$ jointly independent } \\
0 &  \hbox{ otherwise } 
\end{array}\right.
\]
\end{Ex}
The following binary property 
allows for some permutations of variables:
\begin{Ex}[conditional independence or partial uncorrelatedness]\label{ex:ci}
\[
Q(P_{Y_1,\dots,Y_k}) = \left\{ \begin{array}{ccc}
1 & \hbox{ for } &  Y_1 \independent Y_2 \,| Y_3,\dots,Y_k \\
0 &  \hbox{ otherwise } &
\end{array}\right.
\]
Likewise, $Q(P_{Y_1,\dots,Y_k})$ could indicate whether 
$Y_1$ and $Y_2$ have zero partial correlations, given $Y_3,\dots,Y_k$ (that is, whether they are uncorrelated after linear regression on $Y_3,\dots,Y_k$). 
\end{Ex}
To emphasize that our causal models are not only used to predict conditional independences but also other statistical properties we also mention linear additive noise models \citep{Kano2003}:
\begin{Ex}[existence of linear additive noise models]
\label{ex:lingam}
$Q(P_{Y_1,\dots,Y_k})=1$ if and only
if there is a matrix $A$ with entries $A_{ij}$,
that is lower triangular after permutation of basis vectors,
 such that 
\begin{equation}\label{eq:lingam}
Y_i = \sum_{j<i} A_{ij} Y_j +N_j,
\end{equation}
where $N_1,\dots,N_k$ are jointly independent noise variables. If no such additive linear model exists, we
set 
$Q(P_{Y_1,\dots,Y_k})=0$. 
\end{Ex}
Lower triangularity means that there is a DAG
such that $A$ has non-zero entries $A_{ij}$ whenever there is an arrow from $j$ to $i$.
Here, the entire order of variables matters.
Then \eqref{eq:lingam} is a linear structural equation.
Whenever the noise variables $N_j$ are non-Gaussian, linear additive noise models allow for 
the unique identification of 
the causal DAG 
\citep{Kano2003} if one assumes that the true generating process has been linear. Then, $Q(P_{Y_1,\dots,Y_k})=1$ holds for those orderings of variables that are compatible with the true DAG. This way, we have a statistical property that is directly linked to the causal structure (subject to a strong assumption, of course).

The following simple binary property will also
play a role later:
\begin{Ex}[sign of correlations]\label{ex:signcorrelations}
Whether a pair of random variables is positively or negatively correlated defines a simple binary
property in a scenario where all variables are correlated:
\[
Q(P_{Y_1,Y_2}) = \left\{\begin{array}{cl} 1 & \hbox{  if } \cov(Y_1,Y_2) >0\\
-1 & \hbox{ if } \cov(Y_1,Y_2)<0 
\end{array}\right.
\]
\end{Ex}
Finally, we mention a statistical property that is not binary but positive-semidefinite matrix-valued:
\begin{Ex}[covariances and correlations]\label{ex:cov}
For $k$ variables $Y_1,\dots,Y_k$
let $\cY$ be the set of positive semi-definite matrices. Then define
\[
Q: P_{Y_1,\dots,Y_k} \mapsto \Sigma_{Y_1,\dots,Y_k},
\]
where $\Sigma_{Y_1,\dots,Y_n}$ denotes the
joint covariance matrix of $Y_1,\dots,Y_n$.
For $k=2$, one can also get a real-valued property by focusing on the off-diagonal term.
One may then define a map $Q$  
\[
Q(P_{Y_1,Y_2}) := \cov(Y_1,Y_2),
\]
or alternatively, if one prefers correlations, define
\[
Q(P_{Y_1,Y_2}) := \corr(Y_1,Y_2).
\]
\end{Ex}

\subsection{Statistical and causal models}

The idea of this paper is that causal models are used to
predict statistical properties, but a priori, the models need not be causal. One can use Bayesian networks, for instance, to encode conditional statistical independences with or without interpreting the arrows as formalizing causal influence. For the formalism introduced in this section it does not matter whether one interprets the models as causal or not. 
Example~\ref{ex:chain}, however, suggested
that model classes that come with a causal semantics are particularly intuitive 
regarding the statistical properties they predict.
We now introduce our notion of `models':
\begin{Def}[models for a statistical property]
Given a set $S:=\{X_1,\dots,X_n\}$ of variables 
and some statistical property $Q$, 
 a model $M$ for $Q$ 
 is a class of joint distributions 
$P_{X_1,\dots,X_n}$ that coincide regarding the output of $Q$, that is,
\[
Q(P_{Y_1,\dots,Y_n}) = Q(P'_{Y_1,\dots,Y_n}) \quad \forall P_{Y_1,\dots,Y_n},P'_{Y_1,\dots,Y_n} \in M,
\]
where $Y_1,\dots,Y_k \in S$. 
Accordingly, the property 
$Q_M$  predicted by the model $M$ is given by a function
\[
(Y_1,\dots,Y_k) \mapsto Q_M\left[(Y_1,\dots,Y_k)\right] := Q(P_{Y_1,\dots,Y_n}),
\]
for all $P_{X_1,\dots,X_n}$ in $M$,
where $(Y_1,\dots,Y_k)$ runs over all
 allowed input (partly ordered) tuples of $Q$.
\end{Def}
Formally, the `partly ordered tuples' are
equivalence classes  in $S^k$, where equivalence 
corresponds to irrelevant reorderings of the tuple. To avoid cumbersome formalism, we will
just refer to them as `the allowed inputs'.

Later, such a model will be, for instance, a DAG $G$ and the property $Q$ formalizes all conditional independences that hold for the respective Markov equivalence class. 
To understand the above terminology, note that $Q$
receives a distribution as input and the output of $Q$ tells us the respective property
of the distribution (e.g. whether independence holds). In contrast, $Q_M$ receives a set of nodes (variables) of the DAG as inputs and 
tells us the property entailed by $M$.
The goal will be to find a model $M$ for which
$Q_M$ and $Q$ coincide for the majority of observed tuples of variables. 

Our most prominent example reads:
\begin{Ex}[DAG as model for conditional independences]\label{ex:dagsci}
Let $G$ be a DAG with nodes $S:=\{X_1,\dots,X_n\}$ and $Q$ be the set of conditional independences as in Example~\ref{ex:ci}.
Then, let $Q_G$ be the function on
$k$-tuples from $S$ defined by
\[
Q_G\left[(Y_1,\dots,Y_k)\right] :=0
\]
 if and only if the Markov condition implies $Y_1\independent Y_2\,|Y_3\dots,Y_k$, and
\[
Q_G\left[ (Y_1,\dots,Y_k)\right] :=1
\] 
otherwise. 
\end{Ex}
Note that $Q_G(.)=1$ does not mean that the
Markov condition implies dependence, it only
says that it does not imply independence.
However, if we think of $G$ as a causal DAG, the common assumption of causal faithfulness \citep{Spirtes1993} states that all dependences
that are allowed by the Markov condition occur in reality. Adopting this assumption, 
we will therefore interpret $Q_G$ as a function that predicts dependence or independence, instead of making no prediction otherwise.

We also mention a particularly simple
class of DAGs that will appear as an interesting example later: 
\begin{Ex}[DAGs consisting of a single colliderfree path]\label{ex:cfreepath}
Let $\cG$ be the set of DAGs that consist of a single colliderfree path 
\[
X_{\pi(1)} - X_{\pi(2)} - X_{\pi(3)} - \cdots
- X_{\pi(n)}, 
\]
where the directions of the arrows are such that
there is no variable with two arrowheads.
Colliderfree paths have the important property that any dependence between two non-adjacent nodes is screened off by any variable that lies between the two nodes, that is, 
\[
X_j \independent X_k|\, X_l,
\]
whenever $X_l$ lies between $X_j$ and $X_k$. 
If one assumes, 
in addition, that
the joint distribution is Gaussian,
the partial correlation
between $X_j$ and $X_k$, given $X_l$,
vanishes. This implies that 
the correlation coefficient of any two nodes is given by the product of pairwise correlations along the path:
\begin{equation}\label{eq:corrproduct}
\corr (X_j,X_k)
= \prod_{i=\pi^{-1}(j)}^{\pi^{-1}(k)-1} \corr (X_{\pi(i)},X_{\pi(i+1)}) =: \prod_{i=\pi^{-1}(j)}^{\pi^{-1}(k)-1} r_i. 
\end{equation}
 This follows easily by induction because $\corr(X,Z)=\corr (X,Y)\corr(Y,Z)$ for any three variables $X,Y,Z$ with $X\independent Z\,|Y$. 
Therefore, such a DAG, together with all the correlations between adjacent nodes, 
 predicts all pairwise correlations. 
We therefore specify our model by 
$M:=(\pi,r)$, that is, the ordering of nodes and correlations of adjacent nodes. 
\end{Ex}

The following example shows that a DAG can entail also properties that are more sophisticated than just conditional independences and correlations: 
\begin{Ex}[DAGs and linear non-Gaussian additive noise]
\label{ex:daglingam}
Let $G$ be a DAG with nodes $S:=\{X_1,\dots,X_n\}$ and $Q$ be the linear additive noise property in Example~\ref{ex:lingam}.
Let $Q_G$ be the function
 on
$k$-tuples from $S$ defined by
\[
Q_G((Y_1,\dots,Y_k)) :=1
\]
 if and only if the following two conditions hold:\\ 
 (1) $Y_1,\dots,Y_k$ is a
 causally sufficient subset from $S$ in $G$ and, that is, no two different $Y_i,Y_j$ have a common ancestor in $G$ \\
 (2) the ordering $Y_1,\dots,Y_k$ is consistent with $G$, that is,
 $Y_j$ is not ancestor of $Y_i$ in $G$
 for any $i<j$.    
\end{Ex}
Example~\ref{ex:lingam} predicts from the graphical structure whether the joint distribution of some subset of variables admits a linear additive noise model. The idea is the following. Assuming that the entire joint distribution of all $n$ variables has been generated by a linear additive noise model \citep{Kano2003}, any $k$-tuple 
$(Y_1,\dots,Y_k)$ also admits a linear additive noise model provided that (1) and (2) hold. 
This is because marginalizations of linear additive noise models remain linear additive noise models whenever one does not marginalize over common ancestors.\footnote{Note that the class of {\it non-linear} additive noise models \citep{Hoyer} is not closed under marginalization.}  
Hence, conditions (1) and (2) are clearly sufficient.
For generic parameter values of the underlying linear model the two conditions are also necessary because 
linear non-Gaussian models render
causal directions uniquely identifiable and also admit the detection of hidden common causes \citep{HoyerLatent08}.

\subsection{Testing properties on data}

So far we have introduced statistical properties as mathematical properties of distributions.
In real-world applications, however, we want to
predict the outcome of a test on empirical data.
The task is no longer to predict whether some set of variables is `really' conditionally independent, we just want to predict whether the statistical test at hand accepts independence.
Whether or not the test is appropriate for the respective mathematical property $Q$ is not relevant for the generalization bounds derived later. If one infers DAGs, for instance, by 
partial correlations and uses these DAGs only to infer partial correlations, it does not matter that non-linear relations actually prohibit to replace 
conditional independences with partial correlations. 
The reader may get confused by these remarks
because now there seems to be no requirement 
on the tests at all if it is not supposed to be a good test for the mathematical property $Q$.
This is a difficult question. One can say, however, that for a test that is entirely
unrelated to some property $Q$ we have no guidance what outcomes of our test a causal hypothesis should predict. The fact that partial correlations, despite all their limitations, approximate conditional independence, 
does provide some justification for expecting 
vanishing partial correlations in many cases where there is d-separation in the causal DAG.

We first specify the information provided by a data set.
\begin{Def}[data set]
 Each data set $D_j$ is an $l_j\times k_j$ matrix of observations, where $l_j$ denotes the sample size and $k_j$ the number of variables. 
 Further, the dataset contains a $k_j$-tuple
 of values from $\{1,\dots,n\}$ 
 specifying the $k_j$ variables $Y_1,\dots,Y_{k_j} \subset \{X_1,\dots,X_n\}$ the samples refer to. 
\end{Def}
To check whether the variables under consideration in fact satisfy the property predicted by the model
we need some statistical test (in the case of binary properties) or an estimator (in the case of real-valued or other properties). 
Let us say that we are given 
some test / estimator for a property $Q$, formally defined as follows:
\begin{Def}[statistical test / estimator for $Q$]
A test (respective estimator for non-binary properties) for the statistical property $Q$ 
with range $\cY$ is a map
\[
Q_T: D \mapsto Q_T(D) \in \cY, 
\]
where $D$ is a data set that involves 
the observed instances of
$Y_1,\dots,Y_n$, where $(Y_1,\dots,Y_k)$ is
a partly ordered tuple that defines
an allowed input of $Q$.
 $Q_T(D)$ is thought to indicate the outcome
of the test or the estimated value, respectively. 
\end{Def}

\subsection{Phrasing the task as standard prediction problem}

Our learning problem now reads: given the data sets
$D_1,\dots,D_l$ with the $k$-tuples $S_1,\dots,S_l$
of variables, find a model $M$ such that
$Q_M(S_j) = Q_T(D_j)$ for all data sets $j=1,\dots,l$ or, less demanding, for most of the data sets.
However, more importantly, we would like to choose $M$ such that $Q_M(S_j) =Q_T(D_{l+1})$ will also hold for
 a {\it future} data set $D_{l+1}$.  

The problem of constructing a causal model now becomes a standard learning problem
where the training as well as the test examples  
are {\it data sets}. Note that also 
\citet{Lopez2015} phrased a causal inference problem as standard learning problem. There,
the task was to classify two variables as `cause' and `effect' after getting
a large number of cause-effect pairs as training examples. Here, however, the data sets refer to observations from different subsets of variables that are assumed to follow a joint distribution
over the union of all variables occurring in any of the data sets.

Having phrased our problem as a standard prediction scenario whose inputs are subsets of variables, we now introduce the usual notion of empirical error on the training data accordingly:
\begin{Def}[empirical error]
Let $Q$ be a statistical property,
$Q_T$ a statistical test, and
$D:=\{D_1,\dots,D_k\}$ a collection of data sets referring to the variable tuples $S_1,\dots,S_k$. Then the empirical training error of model $M$ is defined by
\[
L(M) := \frac{1}{k} \sum_{j=1}^k |Q_T (D_j) - Q_M(S_{D_j})|. 
\]
\end{Def}
Finding a model $M$ for which the training error
is small does not guarantee, however, that 
the error will also be small for future test data. If $M$ has been chosen from a `too rich'
class of models, the small training error may be 
a result of overfitting. Fortunately we have phrased our learning problem in a way that
the richness of a class of causal models
can be quantified by standard concepts from statistical learning theory. 
This will be discussed in the following section.

\section{Capacity of classes of causal models \label{sec:VCdim}}

We have formally phrased our problem
as  a prediction problem where the task is to predict the outcome in $\cY$ of $Q_T$ for
some test $T$ applied to an unobserved variable set. 
We now assume that we are given a class of models $\cM$ defining 
statistical properties $(Q_M)_{M\in \cM}$ that are supposed to predict the outcomes of $Q_T$.

\subsection{Binary properties}
Given some binary statistical property, we can straightforwardly apply the notion of VC-dimension \cite{Vapnik}  to classes $\cM$ and define:
\begin{Def}[VC dimension of a model class for binary properties]
Let $S:=\{X_1,\dots,X_n\}$ a
set of variables and $Q$ be a binary property. Let
$\cM$ be a class of models for $Q$, that is, each $M\in \cM$ defines a map
\[
Q_M: (Y_1,\dots,Y_k) \mapsto Q_M\left[(Y_1,\dots,Y_k)\right] \in \{0,1\}. 
\]
Then the VC dimension of $\cM$ is the largest number $h$ such that there
are $h$ allowed inputs $S_1,\dots,S_h$
for $Q_M$ such that the
 restriction of all $M\in \cM$ to $S_1,\dots,S_h$
runs over all $2^h$ possible binary functions. 
\end{Def}
Since our model classes are thought to be given by causal hypotheses the following class  is our most important example although we will later further restrict the class to get stronger generalization bounds:
\begin{Lem}[VC dimension of 
conditional independences entailed by DAGs] 
Let $\cG$ be the set of DAGs with nodes
$X_1,\dots,X_n$. For every $G\in \cG$,
we define $Q_G$ as in Example~\ref{ex:dagsci}.
Then the VC dimension $h$ of
$(Q_G)_{G\in \cG}$ satisfies
\begin{equation}\label{eq:VCDAGs}
h\leq n \log_2 n + n(n-1)/2\in O(n^2).
\end{equation}
\end{Lem}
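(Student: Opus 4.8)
The plan is to bound the VC dimension by a straightforward counting argument: the VC dimension of any finite hypothesis class $\mathcal H$ satisfies $h \le \log_2 |\mathcal H|$, because shattering $h$ points requires realizing $2^h$ distinct labelings, each of which needs at least one distinct hypothesis. Hence it suffices to count, or rather to upper-bound, the number of distinct functions $Q_G$ that can arise as $G$ ranges over all DAGs on the vertex set $\{X_1,\dots,X_n\}$.

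First I would note that $Q_G$ is determined by $G$ (different DAGs may give the same $Q_G$, but that only helps), so the number of distinct $Q_G$ is at most the number of DAGs on $n$ labelled vertices. To count DAGs I would use the standard decomposition: every DAG has a (generally non-unique) topological order, so every DAG arises from choosing a linear order of the $n$ vertices — there are $n!$ of these — and then choosing, for each of the $\binom{n}{2}$ ordered pairs $i \prec j$ consistent with that order, whether or not to include the edge $i \to j$ — that is $2^{\binom{n}{2}}$ choices. This overcounts DAGs (each DAG is counted once per compatible topological order), but for an upper bound that is fine: the number of DAGs is at most $n! \, 2^{\binom{n}{2}} = n! \, 2^{n(n-1)/2}$.

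Therefore $h \le \log_2\!\big(n!\,2^{n(n-1)/2}\big) = \log_2 n! + n(n-1)/2$, and since $\log_2 n! \le n \log_2 n$ (each of the $n$ factors in $n!$ is at most $n$), we obtain $h \le n\log_2 n + n(n-1)/2$, which is $O(n^2)$ as claimed. The only mild subtlety worth a sentence is that the equivalence classes of "allowed inputs" for $Q_G$ (partly ordered tuples) form a finite set, so $Q_G$ is genuinely a function into $\{0,1\}$ on a finite domain and the elementary bound $h \le \log_2|\mathcal H|$ applies verbatim.

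I do not expect a real obstacle here; the argument is entirely routine. If anything, the only thing to be careful about is not to conflate "number of DAGs" with "number of distinct $Q_G$" in the wrong direction — one wants $|\{Q_G\}| \le \#\text{DAGs}$, which is immediate since $G \mapsto Q_G$ is a (surjective onto its image) function. A sharper bound using the exact count of labelled DAGs is available but pointless, since the stated bound is already the desired $O(n^2)$ and the cruder estimate $n!\,2^{n(n-1)/2}$ suffices.
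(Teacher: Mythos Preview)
Your proof is correct and follows essentially the same counting argument as the paper: bound the number of DAGs by $n!\,2^{n(n-1)/2}$ via a topological order plus edge choices, then use $h\le\log_2|\mathcal H|$. The only difference is cosmetic---the paper invokes Stirling's formula to obtain $n!<n^n$, whereas your direct observation that each of the $n$ factors of $n!$ is at most $n$ gives $\log_2 n!\le n\log_2 n$ more cleanly.
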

\begin{proof}
The number $N_n$ of DAGs on $n$ labeled nodes 
 can easily be upper bounded by the number of orderings times the number of choices to draw an edge or not.  This yields
$N_n < n! 2^{n(n-1)/2}$.
Using Stirling's formula we obtain
\[
n! < e^{1/(12n)} \sqrt{2\pi n} \left(\frac{n}{e}\right)^n < n^n, 
\] 
and thus $N_n< n^n 2^{n(n-1)/2}$.
Since the VC dimension of a class cannot be larger than the binary logarithm of the number of
elements it contains, \eqref{eq:VCDAGs} easily follows.
\end{proof}

\vspace{0.3cm}
Note that the number of possible conditional independence tests of the form $Y_1\independent Y_2\,|Y_3$ already grows faster than the VC dimension, namely with the third power.
Therefore, the class of DAGs indeed defines a restriction since it is not able to explain 
all possible patterns of conditional (in)dependences even 
when one conditions on one variable only.

Nevertheless, the set of all DAGs may be too large for the number of data sets at hand. We therefore mention the following more restrictive class given by so-called polytrees, that is, DAGs whose skeleton is a tree (hence they contain no undirected cycles). 
\begin{Lem}[VC dimension of 
cond.~independences entailed by
polytrees]\label{lem:polytrees}
Let $\cG$ be the set of polyntrees with nodes
$X_1,\dots,X_n$. For every $G\in \cG$,
we define $Q_G$ as in Example~\ref{ex:dagsci}.
Then the VC dimension $h$ of
$(Q_G)_{G\in \cG}$ satisfies
\begin{equation}\label{eq:polytrees}
h\leq n (\log_2 n +1). 
\end{equation}
\end{Lem}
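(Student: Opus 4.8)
The plan is to mimic the proof of the previous lemma for general DAGs, replacing the crude count $N_n < n^n 2^{n(n-1)/2}$ with a sharp count of labelled polytrees, and then invoking the same fact that the VC dimension of a finite function class is bounded by the binary logarithm of its cardinality. So the whole task reduces to bounding the number of polytrees on $n$ labelled nodes. First I would recall that a polytree is a DAG whose skeleton is a forest; if we insist on connectedness the skeleton is a tree, but for the bound it is harmless (and cleanest) to allow forests, since every polytree's skeleton is in particular a forest and the forest count dominates the connected-tree count. By Cayley's formula, the number of labelled trees on $n$ nodes is $n^{n-2}$, and the number of labelled forests on $n$ nodes is at most $n^{n-1}$ (indeed exactly $(n+1)^{n-1}$ by the generalized Cayley formula, but $n^{n-1}$ suffices and is even a bit loose); in any case the number of skeletons is at most $n^{n-1} \le n^n$.

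Next I would count orientations. Given a fixed skeleton that is a tree (or forest), it has at most $n-1$ edges, and each edge can be oriented in one of two ways, so there are at most $2^{n-1}$ orientations; not all of these are acyclic as DAGs in the collider-counting sense, but since we only need an \emph{upper} bound we may keep all $2^{n-1}$. Hence the number $T_n$ of labelled polytrees satisfies
\[
T_n \le n^{n-1} \cdot 2^{n-1} \le n^n \cdot 2^n.
\]
Taking binary logarithms, $\log_2 T_n \le n \log_2 n + n = n(\log_2 n + 1)$. Since the VC dimension of $(Q_G)_{G\in\cG}$ cannot exceed $\log_2 |\cG| \le \log_2 T_n$ — exactly the same observation used in the proof of the DAG lemma, because a class of $2^h$ distinct binary functions requires at least $2^h$ distinct models to shatter $h$ points — we obtain $h \le n(\log_2 n + 1)$, which is \eqref{eq:polytrees}.

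The only place that requires genuine care, rather than routine bookkeeping, is making sure the skeleton count is legitimately at most $n^{n-1}$ (or $n^n$): one must decide whether "polytree" is meant to include disconnected DAGs, and whether to use Cayley's formula for trees ($n^{n-2}$) or the forest count; either way the bound $n^n$ on skeletons is comfortably valid, and the factor $2^{n-1}$ from orientations is immediate. I expect no real obstacle here — this is a strictly easier counting problem than the general-DAG case, and the stated bound $n(\log_2 n + 1)$ is in fact somewhat generous, so the argument has slack.
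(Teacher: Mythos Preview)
Your proof is correct and follows essentially the same strategy as the paper: bound the number of polytrees and take the binary logarithm. The one substantive difference is in how the orientation factor is handled. The paper invokes a result of \citet{Radhakrishnan2017} bounding the number of \emph{Markov equivalence classes} of polytrees on a fixed tree skeleton by $2^{n-1}-n+1$, yielding $n^{n-2}(2^{n-1}-n+1)\le n^{n-2}2^n$; you instead bound the number of \emph{orientations} of a tree by the cruder $2^{n-1}$. Your route is more elementary and self-contained (no external citation needed) at the cost of a marginally looser count, but both land comfortably inside the stated bound $n(\log_2 n+1)$, so nothing is lost.

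Two minor inaccuracies worth cleaning up. First, your claim that the number of labelled forests is at most $n^{n-1}$ is wrong: the exact count is $(n+1)^{n-1}>n^{n-1}$. This does no damage because (a) the paper defines polytrees to have \emph{connected} skeleton, so Cayley's $n^{n-2}$ applies directly, and (b) your final bound $n^n\cdot 2^n$ absorbs the slack either way. Second, your aside that ``not all of these are acyclic as DAGs in the collider-counting sense'' is confused: on a tree skeleton \emph{every} edge-orientation yields an acyclic directed graph, since there are no undirected cycles to begin with. You only need an upper bound, so this is harmless, but the remark should simply be dropped.
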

\begin{proof}
 According to Cayley's formula, the number of trees with $n$ nodes reads
$n^{n-2}$ \citep{Aigner1998}. The number of Markov equivalence classes of polytrees can be bounded from above by 
$
2^{n-1} -n +1
$
 \citep{Radhakrishnan2017}.
Thus the number of Markov equivalence classes
of polytrees is upper bounded by
\begin{equation}\label{eq:numberOffunc}
n^{n-2} (2^{n-1} -n +1)\leq n^{n-2} 2^n.
\end{equation}
 Again, the bound follows by taking the logarithm.
\end{proof}

\vspace{0.3cm}

We will later use the following result:
\begin{Lem}[VC dimension of sign of correlations along a path]\label{Lem:VCcorrPath}
Consider the set of DAGs on $X_1,\dots,X_n$ that consist of a single
colliderfree path as in Example~\ref{ex:cfreepath} and assume multivariate Gaussianity. The sign of pairwise 
correlations is then determined by
the permutation $\pi$ that aligns the graph
and the sign of correlations of all adjacent pairs. We thus parameterize a model by $M:=(\pi,s)$ where the vector $s:=(s_1,\dots,s_n)$ denotes the signs of adjacent nodes. 
The full model class $\cM$ is obtained when $\pi$ runs over the entire group of permutations and $s$ over all combinations in $\{-,1+1\}^n$. 
Let $Q$ be the
property indicating the sign of the correlation of any two variables as in
Example~\ref{ex:signcorrelations}. 
Then the VC dimension of $(Q_M)_{M\in \cM}$ is at most $n$.
\end{Lem}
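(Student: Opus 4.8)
The plan is to count the relevant functions and invoke the same logarithm argument used in the previous two lemmas. First I would observe that a model $M=(\pi,s)$ in the class $\cM$ determines the function $Q_M$ completely: by the product formula \eqref{eq:corrproduct}, for any two nodes $X_j,X_k$ the correlation $\corr(X_j,X_k)$ equals the product $\prod_i r_i$ over the edges of the path between them, so its \emph{sign} is the product $\prod_i s_i$ over those same edges. Hence $Q_M[(X_j,X_k)]$ depends on $(\pi,s)$ only through which edges lie strictly between $X_j$ and $X_k$ along the path, and through the parity of the number of negative signs among them. So the number of distinct functions $Q_M$ is at most the number of pairs $(\pi,s)$, which is $n!\,2^n$; taking $\log_2$ would already give a bound like $n\log_2 n + n$, but that is weaker than the claimed $n$, so a crude count of $(\pi,s)$ is \emph{not} enough. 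The real work is to see that the map $(\pi,s)\mapsto Q_M$ is far from injective and collapses onto only about $2^n$ (or even fewer) genuinely different labelings.

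The key reduction I would make is the following. Fix the path and think of each node $X_i$ as carrying a $\pm1$ "potential" $t_i$, defined by $t_i := \prod_{m\le i} s_m$ (a telescoping/gauge transformation of the edge signs), with the convention $t_0=+1$. Then for nodes at positions $a<b$ on the path, the product of edge signs strictly between them is $\prod_{m=a+1}^{b} s_m = t_b/t_a = t_b t_a$ (since $t_a\in\{\pm1\}$). Consequently $\corr(X_a,X_b)>0$ iff $t_a = t_b$, and $<0$ iff $t_a \ne t_b$; that is, the sign of every pairwise correlation is determined purely by whether the two endpoints lie in the same class of the bipartition of the node set induced by the vector $t=(t_1,\dots,t_n)$. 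Two models $(\pi,s)$ and $(\pi',s')$ give the same function $Q_M$ whenever they induce the same partition of $\{X_1,\dots,X_n\}$ into two "potential classes" — the permutation $\pi$ and the base point drop out entirely. So the number of distinct $Q_M$ is at most the number of such bipartitions, which is $2^{n-1}$ (the two global sign flips $t\mapsto -t$ give the same partition). Since the VC dimension of a finite class is at most the binary logarithm of its cardinality, this yields $h \le \log_2 2^{n-1} = n-1 \le n$, as claimed.

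The main obstacle, and the step I would be most careful about, is the claim that $Q_M$ really is a function of the bipartition alone, i.e. that it does not see the \emph{order} in which the nodes are arranged along the path. The point is that $Q$ in Example~\ref{ex:signcorrelations} is evaluated only on \emph{pairs} $(Y_1,Y_2)$, and the sign of $\corr(Y_1,Y_2)$ via the telescoping argument above depends only on the equality or inequality of the two endpoint potentials $t_{Y_1},t_{Y_2}$ — never on which other nodes separate them or in what sequence. If instead $Q$ were, say, a sign-of-partial-correlation property on triples, the betweenness structure would matter and this collapse would fail; so I would emphasize in the write-up that it is precisely the restriction to the pairwise sign property that makes the bound as strong as $n$. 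A secondary, routine point is handling degenerate placements (e.g. adjacent nodes, where the "strictly between" product is empty and the correlation is just $s_i>0$): the formula $t_a t_b$ still gives the right answer since an empty product is $+1$ and $t_{a+1}/t_a = s_{a+1}$, so no special-casing is needed. With the bipartition reformulation in hand, the inequality $h\le n$ follows immediately from the standard fact (already used twice above) that $h \le \log_2|\{Q_M : M\in\cM\}|$.
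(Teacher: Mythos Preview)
Your proof is correct and follows essentially the same approach as the paper: both introduce a node-wise ``potential'' $t_j$ (the paper, somewhat confusingly, reuses the letter $s_j$) defined as the cumulative product of adjacent-edge signs along the path, observe that ${\rm sign}(\corr(X_i,X_j))=t_i t_j$, and conclude that at most $2^n$ distinct functions $Q_M$ arise, whence $h\le n$. Your additional observation that the global flip $t\mapsto -t$ leaves $Q_M$ unchanged, yielding the slightly sharper $h\le n-1$, is a nice refinement the paper does not make.
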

\begin{proof}
Defining 
\[
s_j := \prod_{i=1}^{\pi^{-1}(j)-1} {\rm sign}( \corr (X_{\pi(i)},X_{\pi(i+1)}))
\]
we obtain 
\[
{\rm sign}( \corr(X_i,X_j) ) = s_i s_j,
\]
due to \eqref{eq:corrproduct}.
Therefore, the signs of all can be computed from
$s_1,\dots,s_n$. Since there are $2^n$ possible assignments for these values, $\cG$ thus induces $2^n$ functions and thus the VC dimension is at most $n$.
\end{proof}

\subsection{Real-valued statistical properties}

We also want to obtain quantitative statements about the strength of dependences and therefore
consider also the correlation as an example of a real-valued property.
\begin{Lem}[correlations along a path]
Let $\cM$ be the model class whose elements
$M$ are colliderfree paths together with a list of all correlations of adjacent pairs of nodes, see
Example~\ref{ex:cfreepath}. Assuming also multi-variate Gaussianity, $M$, again, defines
all pairwise correlations and we 
can thus define the model induced property
\[
Q_M\left[(X_j,X_k)\right]:= \corr_M (X_j,X_k),
\]
where the term on the right hand side 
denotes the correlation determined  by the model $M:=(\pi,r)$ as introduced in Example~\ref{ex:cfreepath}.  
Then the VC dimension of $(Q_M)_{M \in \cM}$ is in $O(n)$.
\end{Lem}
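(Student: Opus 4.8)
The plan is to reduce the real-valued case to the pseudo-dimension (or "fat-shattering"/VC dimension of the subgraph class) bound for the same family of functions and to exploit the product formula \eqref{eq:corrproduct}, which shows that every model-induced correlation is a monomial in the $n-1$ adjacent correlations $r_1,\dots,r_{n-1}$ determined by the permutation $\pi$. First I would recall that for real-valued function classes the relevant capacity measure is the pseudo-dimension $\mathrm{Pdim}$, and that $\mathrm{Pdim}$ of a class $\mathcal{F}$ equals the VC dimension of the class of subgraphs $\{(x,t)\mapsto \mathbf{1}[f(x)\le t] : f\in\mathcal{F}\}$; it therefore suffices to bound the VC dimension of this augmented binary class. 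For a fixed permutation $\pi$, the map $(X_j,X_k)\mapsto \corr_M(X_j,X_k)=\prod_{i=\pi^{-1}(j)}^{\pi^{-1}(k)-1} r_i$ is, after taking logarithms of absolute values, an affine function of the vector $(\log|r_1|,\dots,\log|r_{n-1}|)\in\R^{n-1}$ whose coefficients are $0/1$ indicators determined by $\pi$; the sign is handled separately by Lemma~\ref{Lem:VCcorrPath}, which already shows the sign pattern is governed by $n$ bits. So for each fixed $\pi$ the subgraph class is contained in a class of thresholded affine functions on $\R^{n-1}$ (times a finite sign class), whose VC dimension is $O(n)$.

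Next I would remove the dependence on $\pi$. There are $n!$ permutations, but only the Markov equivalence class of the colliderfree path matters, and in any case $n! \le n^n$, so $\log_2(\#\pi) \le n\log_2 n$. Combining a finite union bound for function classes (the VC/pseudo-dimension of a union of $N$ classes each of dimension $d$ is $O(d + \log N)$) with the per-$\pi$ bound $O(n)$ and $\log N = O(n\log n)$ gives $\mathrm{Pdim}((Q_M)_{M\in\mathcal{M}}) \in O(n\log n)$, hence in particular $O(n)$ up to the logarithmic factor — and I would state the conclusion as $O(n)$ in the loose sense the paper uses, or honestly as $O(n\log n)$. Actually, since the statement only claims $O(n)$, I would double-check whether the $\log n$ from the permutations can be absorbed: it cannot in general, so I expect the honest bound to be $O(n\log n)$ and I would phrase the lemma's conclusion accordingly (or note that this matches the binary-case bound up to the same $\log$ factor seen in the earlier lemmas).

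The main obstacle, and the step I would be most careful about, is the passage through logarithms: the correlations $r_i$ can be negative or zero, so $\log|r_i|$ is not globally defined, and a correlation of exactly $0$ collapses the product. The clean fix is to treat the sign structure via Lemma~\ref{Lem:VCcorrPath} (finitely many sign vectors, contributing only $O(n)$ to the dimension) and, on each sign cell, work with the strictly positive quantities $|r_i|$ so that $\log|r_i|$ is well defined and the monomial becomes genuinely affine in log-coordinates; degenerate cases $r_i=0$ lie on a measure-zero set and can be excluded or handled by a limiting argument. A secondary subtlety is that the "input" to $Q_M$ is an unordered pair $\{X_j,X_k\}$, not a point of $\R^{n-1}$ — but the shattering definition only asks for the existence of input tuples realizing all labelings, and the reduction above shows each $Q_M$, as a function on pairs, factors through a linear functional of the log-parameters, which is all that is needed for the pseudo-dimension argument. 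Assembling these pieces yields the claimed $O(n)$ (resp.\ $O(n\log n)$) bound.
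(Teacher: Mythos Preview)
Your plan follows the same skeleton as the paper's proof---take logarithms of absolute values so that the product \eqref{eq:corrproduct} becomes additive, handle the sign separately via Lemma~\ref{Lem:VCcorrPath}, and bound the resulting threshold class by a linear-in-$\R^n$ argument---so the overall strategy is right. The gap is in how you deal with the permutation $\pi$: by taking a union over all $n!$ orderings you only get $O(n\log n)$, and you correctly flag that this falls short of the claimed $O(n)$.

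The paper avoids the union bound by absorbing $\pi$ into the real-valued parameter vector. Concretely, instead of parametrizing by the adjacent log-correlations $\log|r_i|$ (indexed by path position), parametrize by the cumulative sums $\alpha_\ell := \sum_{i\le \ell} \log|r_i|$. Then $\log|\corr_M(X_j,X_k)| = -|\alpha_{\pi^{-1}(j)} - \alpha_{\pi^{-1}(k)}|$. The point is that as $\alpha$ ranges over $\R^n$, the composition $\tilde\alpha := \alpha\circ\pi^{-1}$ also ranges over all of $\R^n$, so the family of maps $(j,k)\mapsto \alpha_{\pi^{-1}(j)}-\alpha_{\pi^{-1}(k)}$ over all $(\alpha,\pi)$ is identical to the family $(j,k)\mapsto \tilde\alpha_j-\tilde\alpha_k$ over all $\tilde\alpha\in\R^n$. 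Embedding the pair $(j,k)$ as $e_j-e_k\in\R^n$, each such map is a linear functional, and the threshold classes $\{\mathbf{1}[\tilde\alpha_j-\tilde\alpha_k\ge\theta]\}$ and $\{\mathbf{1}[\tilde\alpha_j-\tilde\alpha_k<\theta]\}$ are linear classifiers in $\R^n$, hence have VC dimension at most $n+1$. The full subgraph class is then a finite boolean combination (intersections and unions) of these two classes together with the sign class $S$ from Lemma~\ref{Lem:VCcorrPath}; each piece has VC dimension $O(n)$, and by the van der Vaart--Wellner lemma on intersections/unions of concept classes the combination stays in $O(n)$. So the $\log n$ you were worried about is an artifact of treating each $\pi$ separately rather than reparametrizing.
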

\begin{proof} We assume, for simplicity, that
all correlations are non-zero. 
To specify the absolute value of the correlation between
adjacent nodes we define the parameters
\[
\beta_{i} := \log |\corr_M  (X_{\pi(i-1)},X_{\pi(i)})|.
\]
To specify the sign of those correlations we define the binary values
\[
{\rm g}_{i} :=     \left\{\begin{array}{cc}
1 & \hbox{ for }  \corr_M (X_{\pi(i-1)},X_{\pi(i)})  < 0  \\
0 & \hbox{ otherwise } \end{array}\right.,
\]
for all $i\geq 2$. 

It will be convenient to introduce the parameters
\[
\alpha_j := \sum_{i =2}^{j} \beta_i,
\]
which are cumulative versions of the `adjacent log correlations' $\beta_i$.
Likewise, we introduce the binaries
\[
s_j := \left(\sum_{i =2}^{j} g_i\right) {\rm mod }\, 2,
\]
which indicate whether the number of negative correlations along the chain from its beginning 
is odd or even. 

This way, the correlations between any two nodes can be computed from $\alpha$ and $s$:
\[
\corr_M (X_j,X_k) = (-1)^{s_{\pi^{-1}(j)}+s_{\pi^{-1}(k)}} \, e^{|\alpha_{\pi^{-1}(j)} -\alpha_{\pi^{-1}(k)}|}. 
\]
For technical reasons we define $\corr$ formally as a 
function of {\it ordered} pairs of variables
although it is actually symmetric in $j$ and $k$. 
We are interested in the VC dimension of the family  $F:=(f_M)_{M\in \cM}$ 
of real-valued functions defined by
\[
f_{M}(j,k):= \corr_{M} (X_j,X_k)=:\rho^M_{i,j}.
\]
Its VC-dimension is defined as the VC dimension of the
set of classifiers $C:=(c^\gamma_{M})_{M,\gamma}$ with
\[
c^\gamma_{M} (j,k) :=   \left\{\begin{array}{cc}
1 & \hbox{ for }  \rho^M_{j,k}  \geq  \gamma  \\
0 & \hbox{ otherwise } \end{array}\right.,
\]
To estimate the VC dimension of $C$ we 
compose it from classifiers whose VC dimension is easier to estimate. 

We first define the family of classifiers given by
$C^>:=(c^{>\theta}_\alpha)_{\alpha\in \R^-,\theta\in \R}$ with
\[
c^{>\theta}_\alpha (j,k):=   \left\{\begin{array}{cc}
1 & \hbox{ for }  \alpha_{\pi^{-1}(j)}-\alpha_{\pi^{-1}(k)} \geq  \theta  \\
0 & \hbox{ otherwise } \end{array}\right..
\] 
Likewise, we define
$C^<:=(c^{<\theta}_\alpha)_{\alpha\in \R^-,\theta\in \R}$ with
\[
c^{<\theta}_\alpha (j,k):=   \left\{\begin{array}{cc}
1 & \hbox{ for }  \alpha_{\pi^{-1}(j)}-\alpha_{\pi^{-1}(k)} <  \theta  \\
0 & \hbox{ otherwise } \end{array}\right..
\] 
The VC dimensions pf $C^>$ and $C^<$ are at most $n+1$ because they are given by linear functions on the space of all possible $\alpha \in \R^n$ \citep{Vapnik1995}, Section 3.6, Example 1.  
Further, we define a set of classifiers that classify only according to the sign of the correlations:
\[
S:= (c^M_+) \cup (c^M_-), 
\]
where 
\[
c^M_+(j,k) := \left\{ \begin{array}{cc} 1 & \hbox{ if } \rho^M_{j,k} \geq 0 \\
0 & \hbox{ otherwise } \end{array}\right..
\]
Likewise, we set
\[
c^M_-(j,k) := \left\{ \begin{array}{cc} 1 & \hbox{ if } \rho^M_{j,k} < 0 \\
0 & \hbox{ otherwise } \end{array}\right..
\]
Since both components of $S$ have VC dimension $n$ at
most, the VC dimension of $S$ is in $O(n)$.

For $\gamma> 0$, $\rho^M_{j,k} \geq \gamma$ is equivalent to
\[
(\rho^M_{j,k} \geq 0) \wedge (\alpha_{\pi^{-1}(j)}-\alpha_{\pi^{-1}(k)} \geq \log \gamma)  \wedge
 (\alpha_{\pi^{-1}(k)} -\alpha_{\pi^{-1}(j)} \geq \log \gamma).   
\]
 Therefore, 
\[
c^\gamma_M \in S  \sqcap C^> \sqcap C^<,
\] 
for all $\gamma >0$, where  $\sqcap$ denotes the intersection of
`concept classes' \citep{vanDerVaart2009}  given by 
\[
C_1 \sqcap C_2:= (c_1 \cap c_2)_{c_1\in C_1,c_2 \in C_2}.
\]
Likewise, the union of concept classes is given by
\[
C_1 \sqcup C_2:= (c_1 \cup c_2)_{c_1\in C_1,c_2 \in C_2},
\]
as opposed to the set-theoretic unions and intersections.

For $\gamma <0$, $\rho^M_{j,k}\geq \gamma$ is equivalent to
\[
(\rho^M_{j,k} \geq 0) \vee \left\{ (a_{\pi^{-1}(j)} - \alpha_{\pi^{-1}(k)} \geq \log |\gamma |) \wedge (\alpha_{\pi^{-1}(k)} -\alpha_{\pi^{-1}(j)} \geq \log |\gamma|)\right\}.
\]
Hence, 
\[
c^\gamma_M \in S  \sqcup [ C^> \sqcap C^<],
\]
for all $\gamma<0$.
We then obtain:
\[
C \subset (S \sqcap C^> \sqcap C^<) \cup (S \sqcup [ C^> \sqcap C^<]).
\]
Hence, $C$ is a finite union and intersection of concept classes and set theoertic union,
each having VC dimension in $O(n)$. 
Therefore, $C$ has VC dimension in $O(n)$ \citep{vanDerVaart2009}.
 \end{proof}

\section{Generalization bounds \label{sec:generalization}}

\subsection{Binary properties\label{subsec:VCboundsbinary}}

After we have seen that in our scenario causal models
like DAGs define classifiers
in the sense of standard learning scenarios,
we can use the usual VC bounds like  Theorem~6.7 in \cite{Vapnik06} to guarantee generalization to future data sets. To this end, we need to assume that the data sets are sampled from some distribution of data sets, an assumption that will be discussed at the end of this section. 
\begin{Thm}[VC generalization bound]
\label{thm:VCbound}
Let $Q_T$ be a statistical test for some statistical binary property and
$\cM$ 
be a model class with VC dimension $h$
defining some model-induced property $Q_M$.  
Given $k$ data sets $D_1,\dots,D_k$ sampled from distribution $P_D$. Then
\begin{equation}\label{eq:VCbinarygeneral}
\Exp\left[|Q_T(D)-Q_M(D)|\right] \leq    \frac{1}{k} \sum_{j=1}^k |Q_T(D_j)-Q_M(S_{D_j})|
+ 2\sqrt{\frac{h\left(\ln \frac{2k}{h} +1\right) - \ln \frac{\eta}{9}}{k}}
\end{equation}
with probability $1-\eta$. 
\end{Thm}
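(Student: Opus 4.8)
The plan is to recognize that Theorem~\ref{thm:VCbound} is not a new result but a direct instantiation of the classical VC generalization bound for binary classification, once our learning scenario has been cast in the standard form. Concretely, I would first identify the ingredients: the input space is the set of allowed (partly ordered) tuples $S_D$ extracted from data sets $D$; the ``true label'' of an input is the test outcome $Q_T(D)\in\{0,1\}$; the hypothesis class is $\{Q_M : M\in\cM\}$, a family of $\{0,1\}$-valued functions on that input space with VC dimension $h$ by assumption; and the loss is the $0$--$1$ loss $|Q_T(D)-Q_M(S_D)|$, whose expectation under $P_D$ is exactly the left-hand side of \eqref{eq:VCbinarygeneral}. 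The empirical average over $D_1,\dots,D_k$ is the training error $L(M)$ from the definition of empirical error.

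Next I would invoke the uniform convergence bound of Vapnik (Theorem~6.7 in \cite{Vapnik06}, or equivalently the growth-function bound via Sauer's lemma): for a hypothesis class of VC dimension $h$, with probability at least $1-\eta$ over an i.i.d.\ sample of size $k$, every hypothesis $f$ in the class satisfies
\[
\Exp[\text{loss}(f)] \le \widehat{\text{loss}}_k(f) + 2\sqrt{\frac{h\left(\ln\frac{2k}{h}+1\right)-\ln\frac{\eta}{9}}{k}}.
\]
Applying this with $f=Q_M$ and the $0$--$1$ loss above yields \eqref{eq:VCbinarygeneral} verbatim, since the bound holds uniformly over $M\in\cM$ and in particular for whatever $M$ was selected from the data. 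The only thing to check is that our scenario genuinely meets the hypotheses of the classical theorem: the functions $Q_M$ take values in $\{0,1\}$ (true by the definition of a model for a binary property), the class has finite VC dimension $h$ (assumed), and the data sets $D_1,\dots,D_k$ are drawn i.i.d.\ from a fixed distribution $P_D$ (assumed in the statement). One also notes that the test map $D\mapsto Q_T(D)$ together with $D\mapsto S_D$ is a fixed measurable labelling, so no measurability pathology arises.

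The main ``obstacle'' is conceptual rather than technical: one must be comfortable that there is \emph{no requirement relating $Q_T$ to the mathematical property $Q$}, nor relating $Q_M$ to any notion of causal truth — the VC bound is agnostic about what is being predicted. The inequality controls the gap between empirical and expected $0$--$1$ agreement of $Q_M$ with $Q_T$, period. I would therefore make explicit that the role of $Q$ and of causal semantics is only to \emph{suggest} a natural, low-capacity hypothesis class $\cM$; once $\cM$ is fixed, \eqref{eq:VCbinarygeneral} follows mechanically. A minor point worth a sentence is that the bound is stated for a single fixed test distribution $P_D$ and a single property; if one wants simultaneous guarantees across several tests or properties, a union bound over the (finite) collection suffices and only changes $\eta$.

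Finally I would remark that the i.i.d.\ assumption on data sets is the genuinely substantive modelling assumption here, and flag that it will be discussed separately (as the statement already promises ``an assumption that will be discussed at the end of this section''), rather than attempting to weaken it inside the proof.
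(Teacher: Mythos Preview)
Your proposal is correct and matches the paper's own treatment: the paper does not give a separate proof but simply states that, having cast the scenario as a standard classification problem, ``we can use the usual VC bounds like Theorem~6.7 in \cite{Vapnik06}'' to obtain \eqref{eq:VCbinarygeneral}. Your identification of the input space, labels, $0$--$1$ loss, and the i.i.d.\ assumption is exactly what is needed, and in fact spells out more detail than the paper itself provides.
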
  
It thus suffices to increase the number of data sets slightly faster than the VC dimension. 

To illustrate how to apply Theorem~\ref{thm:VCbound} we recall the class of
polytrees in Lemma~\ref{lem:polytrees}.
An interesting property of polytrees
is that every pair of non-adjacent nodes can already be rendered conditional independent by one appropriate intermediate node. This is because there is always at most one (undirected) path
connecting them. Moreover, 
for any two nodes $X,Y$ that are not too close together
in the DAG, there 
is a realistic chance
that some randomly chosen $Z$ satisfies
$X\independent Y\,|Z$. 
Therefore, we consider the following scenario:

\begin{enumerate}
\item
Draw $k$ triples
$(Y_1,Y_2,Y_3)$ uniformly at random and check whether
$
Y_1 \independent Y_2\,| Y_3.
$
\item Search for a polytree $G$
that is consistent with the $k$ observed
(in)dependences.
\item Predict conditional independences
for unobserved triples via $G$
\end{enumerate}


Since the number of points in the training set should increase slightly faster than
the VC dimension (which is $O(n)$, see Lemma~\ref{lem:polytrees}), 
we know that a small fraction of
the possible independence tests (which grows with third power) is already sufficient to predict conditional further independences.

The red curve in Figure~\ref{fig:numberOfTests} provides a rough estimate of how $k$ needs to grow if we want to ensure that the term $\sqrt{.}$ in
\eqref{eq:VCbinarygeneral} is below $0.1$ for $\eta =0.1$. 
The blue curve shows how the number of possible tests grows, which significantly exceeds the required ones after $n=40$.
\begin{figure}
\centerline{
\includegraphics[width=0.8\textwidth]{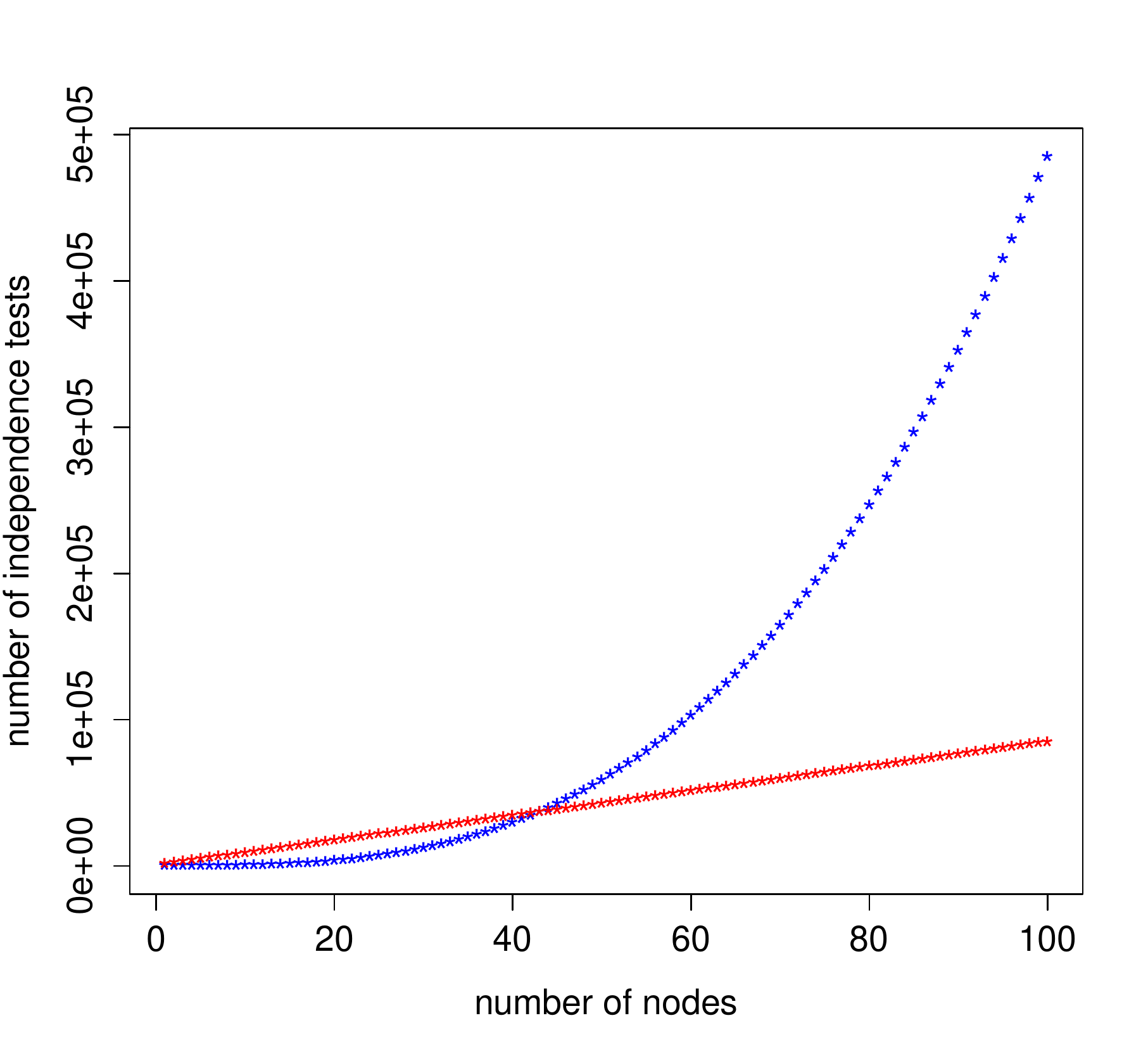}
}
\caption{\label{fig:numberOfTests}  The red curce shows how the number of tests required by the VC bound grows with the number of variables, while the blue one shows how the number of possible tests grows.}
\end{figure}
 For more than $100$ variables, only a fraction of about $1/4$ of the possible tests is needed to predict that also the remaining ones will hold with high probability.

While conditional independences have been used for causal inference already since decades, more recently it became popular to use other properties of distributions to infer causal DAGs. In particular, several methods have been proposed that distinguish between cause and effect from bivariate distributions, e.g.,
\cite{Kano2003,Hoyer,Zhang_UAI,deterministic,discreteAN,Lopez2015,Mooij2016}. 
It is tempting to do multivariate causal inference by 
finding DAGs that are consistent with the bivariate causal direction test.
This motivates the following example.
\begin{Lem}[bivariate directionality test on DAGs]
Let $\cG$ be the class of DAGs on $n$ nodes for which there is 
a directed path between all pairs of nodes.
Define a model-induced property $Q_G$ by
\[
Q_G(X_i,X_j):=\left\{\begin{array}{cc} 1 & \hbox{ iff there is a directed path from $X_i$ to $X_j$}\\
 -1 & \hbox{ iff there is a directed path from $X_j$ to $X_i$
 }\end{array}\right. 
\]
The VC-dimension of $(Q_G)_{G\in \cG}$ is 
at most $n-1$. 
\end{Lem}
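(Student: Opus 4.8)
The plan is to bound the VC dimension by exhibiting a small number of parameters that determine the labelling $Q_G$ on all pairs, just as in the proofs of Lemma~\ref{Lem:VCcorrPath} and the sign-of-correlations lemma. The key observation is that for a DAG $G$ in which every pair of nodes is connected by a directed path, the ancestor relation is a \emph{total order}: if $X_i$ and $X_j$ are joined by a directed path it goes one way or the other, so exactly one of ``$X_i$ ancestor of $X_j$'' and ``$X_j$ ancestor of $X_i$'' holds, and transitivity of the ancestor relation makes this a linear order on $\{X_1,\dots,X_n\}$. Hence $Q_G$ depends on $G$ only through this linear order $\prec_G$, and $Q_G(X_i,X_j)=+1$ iff $X_i\prec_G X_j$.

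First I would make precise that $Q_G$ factors through the permutation $\pi$ representing $\prec_G$: assign to each node $X_i$ its rank $t_i:=\pi^{-1}(i)\in\{1,\dots,n\}$, a real number, and note $Q_G(X_i,X_j)=\mathrm{sign}(t_j-t_i)$. So the induced concept class is contained in the class of halfspace-type classifiers $c(i,j)=\mathbf{1}[t_j-t_i\geq 0]$ on ordered pairs, parameterized by the vector $(t_1,\dots,t_n)\in\R^n$ — in fact by $(t_1,\dots,t_n)$ only up to a common additive shift, so effectively an $(n-1)$-dimensional linear parameter. Then I would invoke the standard fact (the same one cited in the excerpt, \citet{Vapnik1995}, Section~3.6, Example~1) that a family of classifiers given by the sign of a linear functional on an $(n-1)$-dimensional parameter space has VC dimension at most $n-1$.

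There is one subtlety to address: not every vector $(t_1,\dots,t_n)$ arises from a DAG in $\cG$ — the ranks must be a permutation of $1,\dots,n$, and moreover $\cG$ only contains DAGs that actually have directed paths between \emph{all} pairs, which is a nonempty but restricted subclass. But this only makes the realized concept class \emph{smaller} than the full linear-classifier class, so the VC-dimension bound $n-1$ still applies (VC dimension is monotone under taking subclasses). I should also check the degenerate diagonal case $i=j$ is irrelevant since inputs are pairs of distinct nodes; and that ties $t_i=t_j$ do not occur for distinct nodes, so the choice of $\geq$ versus $>$ in the classifier does not matter here.

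The main obstacle — really the only nontrivial point — is establishing cleanly that the ancestor relation on such a $G$ is a total order, i.e.\ that connectivity of every pair by a directed path plus acyclicity forces linearity (antisymmetry is immediate from acyclicity, totality from the hypothesis, and transitivity from concatenating directed paths). Once that is in hand, the reduction to an $(n-1)$-parameter linear classifier and the appeal to the cited VC bound are routine, exactly parallel to the earlier lemmas in Section~\ref{sec:VCdim}.
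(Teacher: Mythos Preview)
Your argument is correct, but it is genuinely different from the paper's. The paper gives a direct combinatorial proof: any collection of $n$ or more unordered pairs on $n$ nodes, viewed as edges of an undirected graph, must contain a cycle $X_{i_1}-X_{i_2}-\cdots-X_{i_l}-X_{i_1}$; the labelling that asks for $X_{i_1}\to X_{i_2}\to\cdots\to X_{i_l}\to X_{i_1}$ is then unrealizable by any $Q_G$, since it would force a directed cycle in the ancestor relation. Hence no set of $n$ pairs is shattered.

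Your route instead passes through the observation (which the paper leaves implicit) that $Q_G$ depends on $G$ only through a total order, represents that order by real ranks $t_1,\dots,t_n$, and embeds the problem as homogeneous linear classification with feature map $(i,j)\mapsto e_j-e_i$. Since these features lie in the $(n-1)$-dimensional hyperplane $\{v:\sum_k v_k=0\}$, the cited Vapnik bound gives VC dimension at most $n-1$. This is exactly parallel to the machinery used for Lemma~4 (correlations along a path), so it fits the surrounding section nicely and reuses the same external reference. The paper's argument, by contrast, is entirely self-contained and avoids any appeal to the linear-classifier bound; it is shorter and arguably more transparent for this particular statement, whereas your approach is more uniform with the other lemmas. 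One small remark: when you invoke the Vapnik fact, phrase it in terms of the dimension of the span of the feature vectors $e_j-e_i$ rather than ``an $(n-1)$-dimensional parameter space''---the bound is really about the feature space, and it happens that quotienting the parameter by the common shift and restricting the features to the sum-zero hyperplane give the same number here.
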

\begin{proof}
The VC dimension is the maximal number $h$ of 
pairs of variables for which the causal directions can be oriented in all $2^h$ possible ways. 
If we take $n$  or more pairs, the undirected graph 
 defined by connecting each pair contains a cycle 
\[
(X_1,X_2),(X_2,X_3),\dots,
(X_{l-1},X_l),(X_l,X_1),
\]
with $l\leq n$.
Then, however, not all $2^l$ causal directions are possible because 
\[
X_1 \to X_2 \to \cdots X_l \to X_1
\]
would be a directed cycle.
\end{proof}
This result can be used to infer causal directions for pairs that have not been observed together:
\begin{enumerate}

\item Apply the bivariate causality test $Q_T$ to $k$ randomly chosen ordered pairs, where $k$ needs to grow slightly faster than $n$.

\item Search for a DAG $G\in \cG$ that is consistent with a last fraction of the outcomes.

\item Infer the outcome of further bivariate causality tests from $G$. 

\end{enumerate}

It is remarkable that the generalization bound holds regardless of how bivariate causality is tested and whether one understands which statistical features are used to infer the causal direction. Solely the fact that 
a causal hypothesis from a class of low VC dimension matches the majority of the bivariate tests ensures that it generalizes well to future tests. 

\subsection{Real-valued properties}

The VC bounds in Subsection~\ref{subsec:VCboundsbinary} referred to binary statistical properties. To consider
also real-valuedd properties note
that 
the VC dimension of  class of real-valued functions $(f_\lambda)_{\lambda\in \Lambda}$ with $f:\cX \rightarrow \R$
is defined as the VC dimension of
the set of binary functions, see Section~3.6 \cite{Vapnik1995}:
\[
\big(f_\lambda^{-1}\left((-\infty, r]\right)\big)_{\lambda \in \Lambda,r \in \R}.
\]
By combining (3.15) with (3.14) and (3.23) in \cite{Vapnik1995} we obtain:
\begin{Thm}[VC bound for real-valued statistical properties] Let $(Q_M)_{M\in \cM}$
be a class of $[A,B]$-valued model-induced 
properties with VC dimension $h$. 
Given $k$ data sets $D_1,\dots,D_k$ sampled from some distribution $P_D$. Then
\[
\Exp[|Q_T(D)-Q_M(D)|] \leq    \frac{1}{k} \sum_{j=1}^k |Q_T(D_j)-Q_M(S_{D_j})|
+ (B-A) \sqrt{\frac{h\left(\ln \frac{k}{h}+1\right) -\ln \frac{\eta}{4}}{k}}
\]
with probability at least $1-\eta$.
\end{Thm}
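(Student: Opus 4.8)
The plan is to reduce this real-valued statement to the binary VC bound of Theorem~\ref{thm:VCbound} by the standard stratification (``layer-cake'') argument, which is also the route behind the cited inequalities of \citet{Vapnik1995}. First I would rescale everything to the unit interval: put $\tilde Q_M := (Q_M-A)/(B-A)$ and $\tilde Q_T := (Q_T-A)/(B-A)$, so that the per-data-set loss becomes $\ell_M(D) := |\tilde Q_T(D)-\tilde Q_M(D)| = |Q_T(D)-Q_M(D)|/(B-A) \in [0,1]$. Since the selected model $M$ is data-dependent, what must actually be controlled is the uniform deviation $\Delta_k := \sup_{M\in\cM}\big|\Exp[\ell_M(D)] - \tfrac1k\sum_{j=1}^k \ell_M(D_j)\big|$; proving $\Delta_k$ is at most the complexity term divided by $B-A$ and then multiplying through by $B-A$ gives the theorem.

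For the stratification step I would use that for any $[0,1]$-valued $\ell$ one has $\ell(D)=\int_0^1 \mathbf{1}\{\ell(D)>t\}\,dt$, so both $\Exp[\ell_M]$ and its empirical mean are integrals over $t\in[0,1]$ of, respectively, the true and the empirical probabilities of the events $\{\ell_M(D)>t\}$. Linearity of the integral then gives $\Delta_k \le \sup_{M\in\cM,\,t\in[0,1]}\big|P(\ell_M(D)>t) - \tfrac1k\sum_j \mathbf{1}\{\ell_M(D_j)>t\}\big|$, which is a purely binary uniform-convergence quantity for the indicator class $\cC := \{\,D\mapsto \mathbf{1}\{\ell_M(D)>t\} : M\in\cM,\ t\in[0,1]\,\}$. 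This is exactly the situation of Theorem~\ref{thm:VCbound} (take the constant function $0$ as the ``test'' $Q_T$ there, so that the loss reduces to the indicator itself and its expectation to $P(\ell_M(D)>t)$), provided one can bound the VC dimension of $\cC$ by $O(h)$; then applying that theorem to $\cC$, noting its complexity term is independent of $t$, integrating over $t\in[0,1]$ and multiplying by $B-A$ produces an inequality of exactly the stated shape.

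The key combinatorial step is that VC-dimension bound for $\cC$. One has $\{\ell_M(D)>t\} = \{Q_M(D)-Q_T(D) > t(B-A)\}\cup\{Q_M(D)-Q_T(D) < -t(B-A)\}$, a union of one strict super-level set and one strict sub-level set of the single function $D\mapsto Q_M(D)-Q_T(D)$. Because $Q_T$ is fixed, the map sending a pair (data set, real number) $(D,r)$ to $(D,r+Q_T(D))$ is a bijection preserving shattering, so the family of ordinate sets $\{(D,r):Q_M(D)-Q_T(D)\le r\}_{M}$ has the same VC dimension as $\{(D,r):Q_M(D)\le r\}_{M}$, namely $h$ --- i.e. translating a VC-subgraph class by a fixed function leaves its index unchanged; negation together with forming unions of one sub-level set and one super-level set then inflates the index only by a constant \citep{vanDerVaart2009}. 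Hence $\cC$ has VC dimension $O(h)$, and the assembly above goes through. The only genuine work, and the main obstacle, is precisely this bookkeeping: checking that neither the absolute value nor the data-dependent offset $Q_T$ enlarges the capacity, and then sharpening the generic constants so that the logarithmic factor comes out as $\ln(k/h)$ rather than $\ln(2k/h)$ --- this last point is what makes it necessary to quote the tighter one-sided real-valued bound of \citet{Vapnik1995} (his eqs.~(3.14), (3.15) and (3.23)) instead of simply routing through the two-sided binary bound of Theorem~\ref{thm:VCbound}.
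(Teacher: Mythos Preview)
Your proposal is substantially more detailed than what the paper actually does: the paper gives no self-contained argument at all and simply states that the inequality follows ``by combining (3.15) with (3.14) and (3.23) in \citet{Vapnik1995}''. Your layer-cake reduction to indicator classes is precisely the mechanism behind those cited inequalities, so in spirit you are reconstructing Vapnik's proof rather than offering an alternative route. Your observation that shifting the subgraph class by the fixed function $Q_T$ preserves its VC index, and that taking absolute values only inflates the index by a constant via \citet{vanDerVaart2009}, is the right way to connect the hypothesis ``$(Q_M)_M$ has VC dimension $h$'' to the quantity that actually enters Vapnik's bound (the VC dimension of the \emph{loss} class). The paper glosses over this point entirely.

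The one honest caveat you already flag is also the only real gap: routing through the two-sided binary bound with a class of VC dimension $O(h)$ rather than $h$ would not reproduce the stated constants (in particular the $\ln(k/h)$ instead of $\ln(2k/h)$ and the absence of an extra multiplicative constant in front of $h$). Since the paper itself obtains the displayed form only by citing Vapnik's sharper real-valued inequality directly, your final remark --- that one ultimately has to invoke (3.14), (3.15), (3.23) of \citet{Vapnik1995} to land on these exact constants --- matches the paper's approach exactly.
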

This bound can easily be applied to the prediction of correlations via collider-free paths: Due to Lemma~\ref{Lem:VCcorrPath}, we then have $h\in O(n)$. Since correlations are in $[-1,1]$, we can set $2$  for $B-A$.

\paragraph{Interpretation of the i.i.d. setting in learning theory}
In practical applications, the scenario is
usually somehow different because
one does not choose `observed' and `unobserved' subsets randomly. Instead, the observed sets are defined by the available data sets. One may object that
the above considerations are therefore inapplicable. There is no formal argument against this objection. However, there may be reasons to believe that the observed variable sets at hand are not substantially different from the unobserved ones whose properties are supposed to be predicted,
apart from the fact that they are observed. Based on this belief, one may still use the above generalization bounds 
as guidance on the richness of the class of causal hypotheses that is allowed to obtain good generalization properties.

\section{Predicting impact of interventions by merging distributions \label{sec:interventions}}

We have argued that causal hypotheses
provide strong guidance on how to merge probability distributions and thus
become empirically testable without resorting to interventions. One may wonder whether this view on causality is completely disconnected to 
interventions. Here I argue that it is not. In some sense, estimating the impact of an intervention can also be phrased as the problem of inferring properties of unobserved joint distributions.

Assume we want to test whether the causal hypothesis $X\to Y$ is true. We would then check how the distribution of $Y$ changes under randomized interventions on $X$. Let us formally introduce a variable $F_X$ \citep{Pearl2000} that 
can attain all possible values $x$ of $X$ 
(indicating to which value $x$ is set to)
or the value ${\tt idle}$ (if no intervention is made).  Whether $X$ influences $Y$ is then equivalent to 
\begin{equation}\label{eq:notind}
F_X \not\independent Y.
\end{equation}
If we demand that this causal relation is unconfounded (as is usually intended by the notation $X\to Y$), we have to test the condition
\begin{equation}\label{eq:notconf}
P_{Y|F_X=x} = P_{Y|X=x}.
\end{equation}
Before the intervention is made,
both conditions \eqref{eq:notind} and \eqref{eq:notconf} refer to the unobserved distribution $P_{Y,F_X}$. 
Inferring whether $X\to Y$ is true from $P_{X,Y}$ thus amounts to inferring 
the unobserved distribution $P_{Y,F_X}$
from $P_{X,Y}$ plus the additional background knowledge regarding the statistical and causal relation between $F_X$ and $X$ (which is just based on the knowledge that the action we made has been in fact the desired intervention).
In applications it can be a non-trivial question why some action can be considered an intervention on a target variable at hand (for instance in complex gene-gene interactions). 
If one assumes that it is based on purely observational data (maybe earlier in the past),
we have reduced the problem of predicting the impact of interventions entirely to the problem of merging joint distributions.

\section{Conclusions}

We have described different scenarios where causal models can be used to infer statistical properties
of joint distributions of variables that have never been observed together. If 
the causal models are taken from a class of
sufficiently low VC dimension, this can be justified by generalization bounds from statistical learning theory. 

This opens a new pragmatic and context-dependent perspective on causality
where the essential empirical content of a causal model may consist in its prediction
regarding how to merge distributions from overlapping data sets. Such a pragmatic use of
causal concepts may be helpful for domains where 
the interventional definition of causality
raises difficult questions
(if one claims that the age of a person causally influences his/her income, as assumed in \citet{Mooij2016}, it is unclear  
what it means to intervene on the variable 'Age'). We have, moreover, argued that 
our pragmatic view of causal models is related to the usual concept of causality in terms of
interventions. 

It is even possible that this view on causality could also be relevant for foundational questions of physics, where 
the language of causal models
plays an increasing role recently 
\citep{Leifer2013,Chaves2015a,Ried15,Spekkens2015,AICarrowoftime}.

\paragraph{Acknowledgements}
Thanks to Robin Evans for correcting remarks on an earlier version.


\end{document}